\definecolor{dblue}{RGB}{0,0,178}
\newcommand*{\defeq}{\mathrel{\vcenter{\baselineskip0.5ex \lineskiplimit0pt
                     \hbox{\scriptsize.}\hbox{\scriptsize.}}}%
                     =}
\newcommand{\calL}{\mathcal{L}}
\newcommand{\N}{\mathbb{N}}
\newcommand{\Z}{\mathbb{Z}}
\newcommand{\abs}[1]{\left\vert#1\right\vert}
\newcommand{\lra}{\longleftrightarrow}
\newcommand{\e}{\equiv}
\newcommand{\gen}[1]{\langle#1\rangle}
\newcommand{\vphi}{\varphi}
\newcommand{\ca}{{\circledast}}
\newcommand{\deq}{\mathrel{\dot{=}}}
\newcommand{\ttA}{{\tt A}}
\newcommand{\ttS}{{\tt S}}
\newcommand{\ttF}{{\tt F}}
\newcommand{\ttV}{{\tt V}}
\newcommand{\ttP}{{\tt P}}
\newcommand{\ttE}{{\tt E}}
\newcommand{\ttM}{{\tt M}}
\newcommand{\axiom}[1]{\mathsf{#1}}
\newcommand{\NF}{\axiom{NF}}
\newcommand{\ZF}{\axiom{ZF}}
\newcommand{\PA}{\axiom{PA}}
\definecolor{dcolor}{rgb}{0,0,0.7}
\definecolor{ecolor}{rgb}{0.7,0,0}
\newcommand{\distinguish}[1]{{\color{dcolor}\widetilde{#1}}}
\newcommand{\exclude}[1]{{\color{ecolor}\undertilde{#1}}}
\newtheoremstyle{carriagedefn}{}{}{\itshape}{}{\bfseries}{.}{\newline}{}
\newtheorem{thm}{Theorem}[section] 
\newtheorem{prop}[thm]{Proposition} 
\newtheorem*{mainthm}{Main Theorem}
\theoremstyle{carriagedefn}
\newtheorem{defn}[thm]{Definition} 
\theoremstyle{remark}
\newtheorem*{eg}{Example}
\title{Stratifiable formulae are not context-free}
\author{Calliope Ryan-Smith}
\email{c.Ryan-Smith@leeds.ac.uk}
\urladdr{https://academic.calliope.mx}
\address{School of Mathematics, University of Leeds, LS2 9JT, UK}
\date{\today}
\keywords{New foundations, stratifiable formula, context-free language.}
\thanks{The author's work was financially supported by EPSRC via the Mathematical Sciences Doctoral Training Partnership [EP/W523860/1]. For the purpose of open access, the author has applied a Creative Commons Attribution (CC BY) licence to any Author Accepted Manuscript version arising from this submission. No data are associated with this article.}
\subjclass[2020]{Primary: 03D05; Secondary: 03E70}
\begin{document}


\begin{abstract}
Stratified formulae were introduced by Quine as an alternative way to attack Russell's Paradox. Instead of limiting comprehension by size (as in \(\ZF\) set theory, using its axiom scheme of separation), unlimited comprehension is given to formulae that are in some sense descended from formulae of typed set theory. By keeping variables in a stratified structure, the most common candidates for inconsistency such as \(\{x\mid x\notin x\}\) are eliminated. Under the usual syntax of set theory, the set of stratified formulae form a formal language. We show that, unlike the full class of well-formed formulae of set theory, this language is not context-free, and extend the result to its complement. Therefore, much like the axioms of \(\PA\) and \(\ZF\) (under their usual axiomatizations), the theory \(\NF\) as a formal language is not context-free. We then introduce a non-standard syntax of set theory and show that with this syntax there is a restricted class of formulae, the \emph{exo-stratified} formulae, that is context-free and full (up to relabelling of variables).
\end{abstract}

\maketitle

\section{Introduction}\label{s:introduction}

The issue of Russell's Paradox is central to the development of modern set theory and was overcome in \cite{whitehead_principia_1992} through the use of the theory of types. Well-formed formulae in this theory need to obey typing restrictions that caused the structure of models to form a natural `layered' system. However, this system of set theory had issues identified by Quine in \cite{quine_new_1937} such as losing the boolean algebra structure of the universe that one may hope for, as negations of formulae only took complements within their own type. Instead, Quine described \emph{stratified} formulae inspired by the theory of types. A stratified formula is a well-formed formula of typed set theory that has `forgotten' its typing. From this, one can produce the set theory now called \emph{New Foundations}:
\begin{equation*}
\begin{aligned}
\NF=\{&(\forall x,y)(x=y\lra(\forall z)(z\in x\lra z\in y))\}\cup\\
\{&(\forall \bar{x})(\exists y)(\forall z)(z\in y\lra\vphi(z,\bar{x}))\mid\vphi\text{ is stratified}\}.
\end{aligned}
\end{equation*}
The description of stratified formulae is quite simple in the context of the typed set theory, but it is not nearly so simple as the language of all well-formed formulae of set theory.
\begin{mainthm}
The formal language of stratified formulae of set theory is not context-free.
\end{mainthm}
Therefore the usual axiomatisation of \(\NF\) is also not context-free, much like the usual axiomatisations of \(\ZF\) and \(\PA\).\footnote{Though one should note well that Hailperin's finite axiomatisation of \(\NF\) from \cite{hailperin_set_1944} (or indeed any finite axiomatisation) is not only context-free, but regular.} However, by using a somewhat more enriched syntax of set theory via the introduction of more variables, one can recover a context-free set of stratified formulae. This set, the \emph{exo-stratified} formulae, are indeed full in the sense that for every stratified formula $\vphi$, there is an exo-stratfied formula that is equal to \(\vphi\), modulo a renaming of variables.\footnote{In fact, this syntax was first described in \cite[p.~62]{holmes_proof_2021} in order to demonstrate that the language of type theory does not presuppose knowledge of arithmetic, and so quite naturally fits into the realm of stratified formulae.}

\subsection{Structure of the paper}\label{s:introduction;ss:structure}

In Section~\ref{s:formal-languages} we give a brief introduction to formal languages, including any preliminaries. In particular, context-free languages and their notational conventions are defined. These languages have closure properties, such as an analogy to the pumping lemma for context-free languages. For example, the generalized Ogden's lemma, from \cite{bader_generalization_1982}, is key to our goal (see Theorem~\ref{thm:g-ogdens}).

In Section~\ref{s:syntax-of-sets} we give an overview of a common formal syntax of set theory. We will demonstrate that this syntax is in the format of a context-free language, and so the entire language of first-order formulae of set theory is context-free. From this, we pick out the subset of \emph{stratified} formulae, which may be considered the well-formed formulae of Typed Set Theory under the operation of `forgetting about the types'.

In Section~\ref{s:main-thm} we appeal to the generalized Ogden's lemma to show that the language of stratified formulae has insufficient closure to be context-free. We then give an overview of how to modify the proof to show that the same result applies to many similar languages, such as the set of stratified formulae in a logical syntax with different basic logical connectives, and the language of acyclic formulae from \cite{al-johar_axiom_2014}.

\section{Formal languages and context-free languages}\label{s:formal-languages}

By an \emph{alphabet} we mean a finite set of symbols, and by a \emph{formal language over $\Sigma$}, we mean a subset of $\Sigma^{{<}\omega}$. We refer to the elements of an alphabet as \emph{symbols} and elements of $\Sigma^{{<}\omega}$ as \emph{strings}. When referring to arbitrary strings, Greek letters ($\alpha,\beta,\dots$) will be used. Given a string $\alpha\colon\{0,\dots,n-1\}\to\Sigma$, we denote its \emph{length} by $\abs{\alpha}\defeq n$. Given strings $\alpha$ and $\beta$, we denote by $\alpha\beta$ their concatenation. As we are working with languages of formulae, we will take additional care to avoid ambiguities. In particular, equality of abstract strings will be denoted \(\e\), and the symbol \(\e\) will not appear in alphabets. Similarly, we avoid using the symbols \(=\), \((\), and \()\) in alphabets, instead opting for \(\deq\), \([\), and \(]\).

\emph{Context-free languages} were introduced in \cite{chomsky_three_1956} as part of a hierarchy of computability of formal languages, and describe many very natural language structures in mathematics, including the set of all formulae of set theory. They took their place as `type 2' languages, being more complex than type 3 languages (regular languages), but less complex than type 1 languages (context-sensitive languages). We shall define context-free languages using \emph{context-free grammars}, though there is a common equivalent definition involving \emph{pushdown automata} that will not be used here. A more thorough treatment of context-free languages can be found in \cite{chiswell_course_2009}.

\begin{eg}
In general we present context-free languages through grids of symbol replacements. Given an alphabet $\Sigma$, we will build a language $\calL\subseteq\Sigma^{{<}\omega}$, but will require an additional alphabet of \emph{nonterminal symbols} $\Omega$ disjoint from $\Sigma$, and a \emph{start symbol} $\ttS\in\Omega$. We then have a collection of \emph{production rules} that tell us what we can do with nonterminal symbols. For example, letting $\Sigma=\{0,1\}$ and $\Omega=\{\ttS\}$, we could have the replacement rule ``$\ttS\mapsto 0\ttS1$''.

In this case, we would start with $\ttS$ and be able to replace it with $0\ttS1$. Then we would again be able to replace the $\ttS$ with another $0\ttS1$, giving us $00\ttS11$, and so on:
\begin{equation*}
\ttS\implies 0\ttS1\implies00\ttS11\implies000\ttS111\implies\cdots
\end{equation*}
However, we will never be able to obtain a string with no nonterminal symbols in it, so we will never finish producing. If instead we add a second rule ``$\ttS\mapsto 01$'' then we can now break out of this cycle at any point to produce strings in our language.
\begin{equation*}
\begin{tikzcd}
\ttS\ar[d,Rightarrow]\ar[r,Rightarrow]&0\ttS1\ar[d,Rightarrow]\ar[r,Rightarrow]&00\ttS11\ar[d,Rightarrow]\ar[r,Rightarrow]&\cdots\\
01&0011&000111&\ddots
\end{tikzcd}
\end{equation*}
When presenting the production rules, we group those derived from the same nonterminal symbol together, so we could present this context-free grammar as
\begin{equation*}
\begin{tabular}{ccccc}
$\ttS$&$\mapsto$&$0\ttS1$&$|$&$01$
\end{tabular}
\end{equation*}
which generates the language $\{0^n1^n\mid n\in\N,n>0\}$. Note that we are denoting production rules with the arrow \(\mapsto\), while we are denoting constructions based on these rules with the arrow \(\implies\).

However, we needn't only have one nonterminal symbol. For example, let $\Sigma=\{0,1,+,\deq\}$ and $\Omega=\{\ttS,\ttF\}$. Then the grammar
\begin{equation*}
\begin{tabular}{ccccccc}
$\ttS$&$\mapsto$&$\ttF\deq\ttF$&&&&\\
$\ttF$&$\mapsto$&$\ttF+\ttF$&$|$&$0$&$|$&$1$
\end{tabular}
\end{equation*}
will generate strings of the form
\begin{equation*}
\delta_0+\delta_1+\cdots+\delta_n\deq\varepsilon_0+\varepsilon_1+\cdots+\varepsilon_m,
\end{equation*}
where each $\delta_i,\varepsilon_j\in\{0,1\}$. Let us now formalise this definition.
\end{eg}

\begin{defn}
A \emph{context-free grammar} is a quadruple $G=(\Omega,\Sigma,R,\ttS)$, where:
\begin{enumerate}
\item[\textup{1.}] $\Omega$ and $\Sigma$ are disjoint alphabets. $\Omega$ defines the \emph{nonterminal symbols}, and $\Sigma$ the \emph{terminal symbols}. The language generated by such a grammar will be over alphabet $\Sigma$. We will always denote nonterminal symbols as capital letters of the {\tt teletype font};
\item[\textup{2.}] $R$ is a collection of \emph{production rules}, a finite subset of $\Omega\times(\Omega\cup\Sigma)^{{<}\omega}$; and
\item[\textup{3.}] $\ttS\in\Omega$ is the \emph{start symbol}.
\end{enumerate}

The production rules for a context-free grammar give rise to the binary `one step' relation on $(\Omega\cup\Sigma)^{{<}\omega}$ that we denote $\implies$ (think of this like $\beta$-reduction), where $\alpha_0\ttA\alpha_1\implies\alpha_0\beta\alpha_1$ if \(\alpha_0,\alpha_1,\beta\in(\Omega\cup\Sigma)^{{<}\omega}\), \(\ttA\in\Omega\), and $\gen{\ttA,\beta}\in R$.

We say that $\alpha\in\Sigma^{{<}\omega}$ is \emph{generated} by the context-free grammar $G$ if there is a finite sequence $\alpha_0,\alpha_1,\dots,\alpha_{n-1}$ with $\alpha_i\implies\alpha_{i+1}$ for all $i$, $\alpha_0\e\ttS$, and $\alpha_{n-1}\e\alpha$. Similarly, the \emph{language} generated by the context-free grammar $G$ is the set of all strings generated by $G$. A \emph{context-free language} is the collection of all strings generated by some context-free grammar.
\end{defn}

As we saw in our examples, context-free grammars are not typically presented with formal quadruples. Suppose that $\Omega={\{\ttA_0,\ttA_1,\dots,\ttA_{n-1}\}}$, and that
\begin{equation*}
R=\{\gen{\ttA_i,\alpha_{i,j}}\mid 0\leq i<n,0\leq j\leq m_i\}.
\end{equation*}
Then we write
\begin{equation*}
\begin{tabular}{cccccccc}
$\ttA_0$&$\mapsto$&$\alpha_{0,0}$&$|$&$\alpha_{0,1}$&$|$&$\cdots$&$\alpha_{0,m_0}$\\
$\ttA_1$&$\mapsto$&$\alpha_{1,0}$&$|$&$\alpha_{1,1}$&$|$&$\cdots$&$\alpha_{1,m_1}$\\
&&&&$\vdots$&&&\\
$\ttA_{n-1}$&$\mapsto$&$\alpha_{n-1,0}$&$|$&$\alpha_{n-1,1}$&$|$&$\cdots$&$\alpha_{n-1,m_{n-1}}$.
\end{tabular}
\end{equation*}
Note that not all nonterminal symbols need to have associated production rules, which can be implemented by allowing \(m_i=-1\) in the case that \(\ttA_i\) is unproductive.

While still being able to describe a wide range of useful formal languages, context-free languages are still only a small part of the world of formal languages (indeed, there are only countably many of them), and so they have strong restrictions that lead to natural closure properties. We are able to exploit a particular family of closure properties, `pumping lemmas', in order to show that certain languages are not context-free. These lemmas generally say that if you have a large enough string in a context-free language then, since the description of the language is only finite, you must produce a loop when you generate it. Hence, you can take a part of the string and pump it to generate more strings in the language.

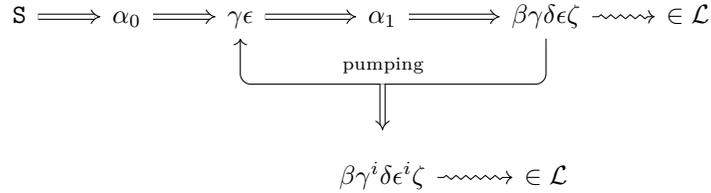
\begin{figure}[H]
\begin{tikzcd}
\ttS\ar[Rightarrow,r]&
\cdots\ar[Rightarrow,r]&
\gamma\varepsilon\ar[Rightarrow,r]&
\cdots\ar[Rightarrow,r]&
\makebox[\widthof{$\beta\gamma\delta\varepsilon\zeta$}][l]{$\mathrlap{\beta\gamma\delta\varepsilon\zeta\in\calL}$}\ar[ll,phantom,bend left=27,"{\scriptsize\text{pumping}}"] \ar[ll,to path={-- ([yshift=-4ex]\tikztostart.south) -| (\tikztotarget.south)},rounded corners]\\
&&&\ar[Rightarrow,d,yshift=1.9ex]\\
&&&\makebox[\widthof{$\beta\gamma^i\delta\varepsilon^i\zeta$}][l]{$\mathrlap{\beta\gamma^i\delta\varepsilon^i\zeta\in\calL}$}
\end{tikzcd}
\centering
\caption{An oversimplification of the pumping lemma for context-free languages.}
\end{figure}
The most familiar pumping lemma in the family is also the easiest to understand, exemplifying the class. Simply put, for all context-free languages there is a critical point $n$ (depending on the language) after which we can pump any string from the language in some fashion.
\begin{thm}[Pumping lemma for context-free languages]\label{thm:pumping}
Let $\calL$ be a context-free language. There is a number $n\geq1$ such that, for all $\alpha\in\calL$, if $\lvert\alpha\rvert>n$, then there is a partition of $\alpha$ into substrings $\alpha\e\beta\gamma\delta\varepsilon\zeta$ such that:
\begin{enumerate}
\item[\textup{1.}] $\lvert\gamma\varepsilon\rvert\geq1$;
\item[\textup{2.}] $\lvert\gamma\delta\varepsilon\rvert\leq n$; and
\item[\textup{3.}] for all $i\in\N$, $\beta\gamma^i\delta\varepsilon^i\zeta\in\calL$.
\end{enumerate}
\end{thm}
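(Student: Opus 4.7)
The plan is to reduce the grammar to Chomsky Normal Form and then apply a pigeonhole argument to parse trees.

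First, I would replace the given context-free grammar $G=(\Omega,\Sigma,R,\ttS)$ by an equivalent grammar $G'=(\Omega',\Sigma,R',\ttS)$ in Chomsky Normal Form, in which every production rule has right-hand side consisting of either two nonterminal symbols or a single terminal symbol (the usual caveat about the empty string is immaterial here since the conclusion of the lemma is vacuous when $\abs{\alpha}\leq n$). This is a standard but fiddly transformation carried out in several stages: removing $\epsilon$-productions, collapsing unit productions, and finally breaking long right-hand sides by introducing fresh nonterminals. Any derivation in a CNF grammar may be visualised as a binary \emph{parse tree} whose internal nodes carry nonterminal labels, whose root is $\ttS$, whose leaves read off the generated string in order, and in which each internal node has either two nonterminal children or a single terminal child.

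Set $k\defeq\abs{\Omega'}$ and $n\defeq 2^{k+1}$. A binary tree in which every root-to-leaf path traverses at most $k+1$ internal nodes has at most $2^{k+1}$ leaves, so any parse tree for $\alpha\in\calL$ with $\abs{\alpha}>n$ must contain a root-to-leaf path traversing more than $k+1$ internal nodes. By the pigeonhole principle, along this path some nonterminal symbol $\ttA$ labels two distinct nodes; I would pick the \emph{deepest} such coincidence, namely a node $u$ strictly above a node $v$, both labelled $\ttA$, such that no nonterminal repeats along the path from $u$ down to a chosen leaf. The subtree rooted at $u$ then has height at most $k+1$ and so yields a string of length at most $2^{k+1}=n$.

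Write $\alpha\e\beta\gamma\delta\varepsilon\zeta$, where $\gamma\delta\varepsilon$ is the yield of the subtree at $u$ and $\delta$ is the yield of the subtree at $v$. Condition 2 is the height bound just established, and condition 1 follows because the CNF rule applied at $u$ produces two nonterminal children while $v$ lies strictly below one of them, so at least one of $\gamma,\varepsilon$ is nonempty. For the pumping condition I would splice: replacing the subtree at $v$ with the subtree at $u$ yields a valid parse tree for $\beta\gamma^2\delta\varepsilon^2\zeta$, iterating this substitution yields $\beta\gamma^i\delta\varepsilon^i\zeta\in\calL$ for all $i\geq1$, and conversely replacing the subtree at $u$ with the subtree at $v$ yields $\beta\delta\zeta\in\calL$, covering $i=0$. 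The main technical obstacle is the conversion to Chomsky Normal Form while faithfully preserving the language, since naive transformations can silently alter membership; once the CNF grammar is in hand, the tree-surgery argument is entirely routine.
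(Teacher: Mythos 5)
Your proof is correct, but it takes a genuinely different route from the paper. The paper does not prove the pumping lemma from scratch at all: it derives it as an immediate corollary of the generalized Ogden's lemma (Theorem~\ref{thm:g-ogdens}, cited from Bader--Moura), by marking every position of $\alpha$ as distinguished and none as excluded, so that $e=0$, the hypothesis ``more than $n^{e+1}$ distinguished positions'' becomes $\abs{\alpha}>n$, and conditions 1--3 of Ogden specialise exactly to conditions 1--3 of the pumping lemma. You instead give the classical self-contained argument: conversion to Chomsky Normal Form, the exponential bound on yields of bounded-height parse trees, pigeonhole along a long root-to-leaf path, and tree surgery for the pumping. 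Your version has the virtue of not depending on a stronger result that the paper itself only cites without proof, at the cost of the CNF preprocessing; the paper's version is a one-liner precisely because it already needs the full strength of Theorem~\ref{thm:g-ogdens} for the main theorem, where plain pumping is too weak to control \emph{which} symbols get pumped. One small point of care in your argument: to conclude that the subtree rooted at $u$ has bounded height (and hence yield of length at most $n$), you should take the initial root-to-leaf path to be one of \emph{maximal} length, so that the segment from $u$ to the chosen leaf really does bound the height of the whole subtree at $u$; with the path chosen arbitrarily, the ``deepest coincidence'' step controls only that one branch. This is the standard fix and the rest of your argument, including the nonemptiness of $\gamma\varepsilon$ from the binary rule at $u$ and the $i=0$ case by replacing the subtree at $u$ with the one at $v$, is sound.
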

\begin{proof}
Immediate corollary of Theorem~\ref{thm:g-ogdens}.
\end{proof}
However, this will not be powerful enough for our purposes, so we will need the \emph{generalized Ogden's lemma}, published in \cite{bader_generalization_1982}. The generalized Ogden's lemma allows one to more finely control the pumping by distinguishing or excluding certain symbols in every string, so that the pumping must include at least one distinguished symbol and may not include any excluded symbols.
\begin{thm}[Generalized Ogden's lemma]\label{thm:g-ogdens}
Let $\calL$ be a context-free language. There is a number $n\geq1$ such that, for all $\alpha\in\calL$ and all colorings of the symbols of $\alpha$ as either `excluded', `unlabeled', or `distinguished', if there are more than $n^{e+1}$ distinguished positions, where $e$ is the number of excluded positions, then there is a partition of $\alpha$ into substrings $\alpha\e \beta\gamma\delta\varepsilon\zeta$ such that:
\begin{enumerate}
\item[\textup{1.}] $\gamma\varepsilon$ contains at least one distinguished position and no excluded positions;
\item[\textup{2.}] if $x$ is the number of distinguished positions in $\gamma\delta\varepsilon$, and $y$ the number of excluded positions in $\gamma\delta\varepsilon$, then $x\leq n^{y+1}$; and
\item[\textup{3.}] for all $i\in\N$, $\beta\gamma^i\delta\varepsilon^i\zeta\in\calL$.
\end{enumerate}
\end{thm}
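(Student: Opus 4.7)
The plan is to extend the standard parse-tree pumping argument by tracking distinguished and excluded positions simultaneously through a derivation tree. First I would convert the grammar to Chomsky Normal Form, so that every production has either a single terminal on the right or a pair of nonterminals, and every derivation tree is a full binary tree with terminals at the leaves. Let \(k\defeq\lvert\Omega\rvert\) and set \(n\defeq 2^{k+1}\).

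Given \(\alpha\in\calL\) with more than \(n^{e+1}\) distinguished positions and \(e\) excluded positions, fix a derivation tree \(T\) for \(\alpha\) and trace a path from the root as follows: at each internal node descend into a child whose subtree contains a distinguished leaf, and call the node \emph{branching} if both children contain distinguished leaves, in which case descend into the one with strictly more. Each branching step at least halves the number of distinguished leaves still available, so reaching a single distinguished leaf from a tree carrying more than \(n^{e+1}=2^{(k+1)(e+1)}\) of them forces more than \((k+1)(e+1)\) branching nodes to occur along the path. Next, call a branching node \emph{bad} if the sibling subtree that the path does not enter contains an excluded position; there are at most \(e\) bad branching nodes, so the good branching nodes form at most \(e+1\) contiguous runs along the path, one of which must contain more than \(k\) good branching nodes. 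Pigeonhole on \(\Omega\) inside this run produces two good branching nodes \(U\) above \(V\) bearing the same nonterminal label \(\ttA\).

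The decomposition \(\alpha\e\beta\gamma\delta\varepsilon\zeta\) is read off the tree: \(\delta\) is the yield of the subtree at \(V\), while \(\gamma\) and \(\varepsilon\) are the yields of the portions of \(U\)'s subtree lying to the left and to the right of \(V\). Standard substitution at the two \(\ttA\)-labelled nodes then gives \(\beta\gamma^i\delta\varepsilon^i\zeta\in\calL\) for every \(i\in\N\). Because every branching node between \(U\) and \(V\) is good, no excluded symbol appears in \(\gamma\) or \(\varepsilon\); because \(U\) itself is branching, at least one distinguished symbol sits off the path within \(U\)'s subtree and hence inside \(\gamma\varepsilon\); and the bound \(x\leq n^{y+1}\) falls out of the depth of the \(U\)-to-\(V\) window within its run.

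The main obstacle is the simultaneous control of the two colorings. A plain pigeonhole on branching nodes readily yields a repeating nonterminal and hence a pumping, but nothing in that argument prevents the pump region \(\gamma\varepsilon\) from sweeping through an excluded position. The exponent \(e+1\) on \(n\) in the hypothesis is calibrated precisely so that, after setting aside the at-most-\(e\) bad branching nodes, at least one interval of the path still contains more than \(k\) good ones; this is the quantitatively tight step on which the whole argument depends.
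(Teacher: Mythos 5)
The paper itself offers no proof of this theorem --- it simply cites Bader and Moura --- so what you have written is a reconstruction of their argument, and your skeleton (Chomsky normal form, a root-to-leaf path through the distinguished positions, branching nodes, discarding at most $e$ ``bad'' ones, pigeonhole inside a run) is the right one. However, as written it establishes neither of the two coloring conditions. The first gap concerns condition~1: the string $\gamma\varepsilon$ is the yield of $U$'s subtree minus that of $V$'s, i.e.\ the concatenation of the yields of \emph{all} off-path sibling subtrees hanging between $U$ and $V$, including the siblings of \emph{non-branching} path nodes. You only declare a node bad when it is branching and its off-path child contains an excluded position; but a non-branching node between $U$ and $V$ can perfectly well have an off-path child containing an excluded symbol (such a child merely contains no \emph{distinguished} leaf), and that symbol then lands in $\gamma\varepsilon$, violating condition~1. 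The repair is to call \emph{any} path node bad when its off-path child's subtree contains an excluded position; since off-path subtrees are pairwise disjoint there are still at most $e$ bad nodes, and your counting survives with the same constants. (A smaller slip in the same part of the argument: you should descend into the child with \emph{at least as many} distinguished leaves, and each branching step then retains at least half of them rather than ``at least halving'' them; the conclusion that there are more than $(k+1)(e+1)$ branching nodes is nevertheless correct.)

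The second gap concerns condition~2, which does not ``fall out of the depth of the $U$-to-$V$ window within its run.'' The quantity $x$ is the number of distinguished positions in the whole subtree rooted at $U$, and the available bound is $x\leq 2^{j}$ where $j$ is the number of branching nodes at or below $U$ on the \emph{entire} path, not merely within $U$'s run. If you pigeonhole in an arbitrary run containing more than $k$ good branching nodes, $U$ may sit above many further branching nodes lying in lower runs that contribute few excluded positions to $U$'s subtree, and then $x$ can exceed $n^{y+1}$. You must select the \emph{lowest} run containing at least $k+1$ good branching nodes and pigeonhole among the \emph{bottom} $k+1$ of them. Then every run strictly below $U$ contributes at most $k$ branching nodes, each bad node below $U$ contributes at least one excluded position to $U$'s subtree (so there are at most $y$ of them, bounding both the number of lower runs and the number of bad branching nodes below $U$), and one obtains $j\leq(k+1)(y+1)$, whence $x\leq 2^{(k+1)(y+1)}=n^{y+1}$. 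Both repairs are routine, but both are needed: the exponent $e+1$ in the hypothesis is indeed the quantitatively tight point, and it is exactly at that point that the unmodified argument fails.
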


\begin{proof}
See \cite[p.~405]{bader_generalization_1982}.
\end{proof}

\section{Syntax of formulae in set theory}\label{s:syntax-of-sets}

Throughout, our language of set theory will involve the nine symbols $[$, $]$, $\to$, $\forall$, $\deq$, $\in$, $w$, $\ca$, and $\bot$. The set of well-formed formulae of set theory are defined by the following rules:
\begin{enumerate}
\item[\textup{1.}] \emph{Variables}. $w$ is a variable. If $\alpha$ is a variable, then ${\alpha}\,{\ca}$ is a variable.\footnote{Usually one would use $w$, $w'$, $w''$, and so on. In this case, we are using $\ca$ so that it is clearer when the symbol is raised to a power (compare $'^{n}$ with $\ca^n$). An alternative convention denotes the variable $w$ with $n$ apostrophes as $w^{(n)}$. We will not use this convention here, as we wish to refer to these $\ca$ symbols in some proofs while not necessarily referring to the associated $w$.}
\item[\textup{2.}] \emph{Atomic formulae}. $\bot$ is an atomic formula. If $\alpha$ and $\beta$ are variables, then $\alpha\deq\beta$ and $\alpha\in\beta$ are atomic formulae.
\item[\textup{3.}] \emph{Formulae}. Every atomic formula is a formula. If $\vphi$ and $\psi$ are formulae, and $\alpha$ is a variable, then $[\forall \alpha]\vphi$ and $[\vphi\to\psi]$ are formulae.
\end{enumerate}
We skip defining terms (in the first-order sense of the word) as terms coincide with variables in our relational language.

\begin{prop}
The set of well-formed formulae of set theory form a context-free language.
\end{prop}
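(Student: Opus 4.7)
The plan is to directly translate the three-clause inductive definition of well-formed formulae into the production rules of a context-free grammar. I would introduce nonterminal symbols $\ttV$ (for variables), $\ttA$ (for atomic formulae), and $\ttF$ (for formulae), and take $\ttF$ as the start symbol. The production rules would be
\begin{equation*}
\begin{tabular}{ccccccc}
$\ttV$ & $\mapsto$ & $w$ & $|$ & $\ttV\ca$ & & \\
$\ttA$ & $\mapsto$ & $\bot$ & $|$ & $\ttV\deq\ttV$ & $|$ & $\ttV\in\ttV$ \\
$\ttF$ & $\mapsto$ & $\ttA$ & $|$ & $[\forall\ttV]\ttF$ & $|$ & $[\ttF\to\ttF]$
\end{tabular}
\end{equation*}
mirroring clauses 1, 2, and 3 of the definition verbatim. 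The terminal alphabet is of course $\Sigma=\{[,],\to,\forall,\deq,\in,w,\ca,\bot\}$.

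To complete the proof I would verify that the language generated by this grammar is exactly the set of well-formed formulae, via two straightforward inductions. In the forward direction, by simultaneous induction on the construction of variables, atomic formulae, and formulae, I show that every well-formed object is derivable from the corresponding nonterminal: e.g.\ if $\alpha$ is a variable derivable from $\ttV$, then so is $\alpha\ca$ by first applying $\ttV\mapsto\ttV\ca$ and then extending the derivation of $\alpha$; if $\vphi$ and $\psi$ are derivable from $\ttF$, then $[\vphi\to\psi]$ is derivable by first applying $\ttF\mapsto[\ttF\to\ttF]$, and so on. In the reverse direction, by induction on derivation length, I show that any terminal string derivable from $\ttV$ (respectively $\ttA$, $\ttF$) is a variable (respectively an atomic formula, a formula); the inductive step simply reads off the rule used at the root of the derivation and invokes the matching clause of the definition.

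The main obstacle: honestly, there isn't one. Each clause of the definition of well-formed formulae is already of the shape ``an object of kind $X$ is formed from objects of kinds $Y_1,\dots,Y_k$ by prepending, appending, or infixing a fixed finite string of terminal symbols'', which is precisely the format captured by context-free production rules. The only point requiring mild care is that the variable rule $\ttV\mapsto\ttV\ca$ is left-recursive, but left-recursion is freely permitted in context-free grammars, and a trivial induction on $n$ confirms that the strings derivable from $\ttV$ are exactly $w\ca^{n}$ for $n\in\N$, which is precisely the set of variables.
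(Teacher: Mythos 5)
Your grammar is identical to the one the paper exhibits, and your argument (read the inductive definition off as production rules, then verify equivalence by routine induction in both directions) is exactly the paper's approach, just spelled out in more detail. Correct, and essentially the same proof.
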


\begin{proof}
The definition is already in the form of a context-free grammar. Writing {\tt F} for `formula', {\tt A} for `atomic formula', and {\tt V} for `variable', the grammar exhibited by Equation~(\ref{cfg:usual-set-theory}) here generates that language.
\begin{equation}\label{cfg:usual-set-theory}
\begin{tabular}{ccccccc}
$\ttF$&$\mapsto$&$\ttA$&$|$&$[\ttF\to\ttF]$&$|$&${[{\forall}\,{\ttV}]}\,{\ttF}$\\
$\ttA$&$\mapsto$&$\bot$&$|$&$\ttV\deq\ttV$&$|$&$\ttV\in\ttV$\\
$\ttV$&$\mapsto$&$w$&$|$&${\ttV}\,{\ca}$&
\end{tabular}
\end{equation}\qedhere
\end{proof}

\begin{defn}[Stratified formula]
Let $\vphi$ be a well-formed formula of set theory, and let $X$ be the set of variables that appear in $\vphi$. A \emph{stratification} of $\vphi$ is a function $\sigma\colon X\to\Z$ such that:
\begin{enumerate}
\item Whenever $x\in y$ appears as a subformula of $\vphi$, $\sigma(x)=\sigma(y)-1$; and
\item whenever $x\deq y$ appears as a subformula of $\vphi$, $\sigma(x)=\sigma(y)$.
\end{enumerate}
A formula is \emph{stratified} if there exists a stratification of it.
\end{defn}

Note that one occasionally makes a distinction between \emph{stratified} formulae and \emph{stratifiable} formulae in the literature, in which a stratified formula is understood to exist with a stratification witnessing that it is stratified, while a stratifiable formula has no such witness picked out. We do not make use of this distinction.

\section{Main theorem}\label{s:main-thm}

\begin{thm}\label{thm:main-thm}
The set of well-formed stratified formulae of set theory do not form a context-free language.
\end{thm}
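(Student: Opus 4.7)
The plan is to apply the generalized Ogden's lemma (Theorem~\ref{thm:g-ogdens}) to a carefully chosen family of long stratified formulae, deriving a contradiction with the supposition that the language is context-free. Let $n\geq 1$ be Ogden's constant. The idea is to design a stratified formula whose stratification constraints are so tightly interwoven that no local modification can keep it stratified.

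A concrete candidate is, for $N\gg n$ and $K\geq n$, the formula
\[
\phi_{N,K}\e[w\ca^N\deq w\ca^N]\wedge\bigwedge_{k=1}^{K}[w\ca^N\in w\ca^{N+k}]
\]
(with $\wedge$ unabbreviated into the primitives $\to$ and $\bot$), stratified by $\sigma(w\ca^N)=0$ and $\sigma(w\ca^{N+k})=1$. The role of the $\in$-atomics is to pin the stratum of every $w\ca^{N+k}$ with $k\in\{1,\dots,K\}$, while the $\deq$-atomic serves as a pumpable trigger. I would distinguish all $\ca$ symbols in the left occurrence of $w\ca^N$ in the $\deq$-atomic and apply Ogden. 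The heart of the argument is that if a pumping transforms that block into $w\ca^{N+\Delta}$ for some $\Delta\in\{1,\dots,K\}$, the atomic $[w\ca^{N+\Delta}\deq w\ca^N]$ forces $\sigma(w\ca^{N+\Delta})=\sigma(w\ca^N)$, while the unmodified $\in$-atomic $[w\ca^N\in w\ca^{N+\Delta}]$ still forces $\sigma(w\ca^{N+\Delta})=\sigma(w\ca^N)+1$, yielding the required contradiction. For pumping power $i=2$, $\Delta$ equals the number of $\ca$ symbols in $\gamma\varepsilon$, which lies in $\{1,\dots,n\}\subseteq\{1,\dots,K\}$, so the collision is guaranteed provided the pumping is confined to the distinguished block.

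The main obstacle is ruling out the other Ogden decompositions. Pumpings that lie inside one of the $\in$-atomics can often be handled by an analogous collision using the remaining unmodified atomics; pumpings straddling an atomic boundary are constrained in size by the Ogden bound and can be case-analysed. The most delicate case is the \emph{symmetric} pumping of the two $\ca$-blocks of the $\deq$-atomic, with equally many $\ca$ symbols in $\gamma$ and in $\varepsilon$, which transforms the atomic into the tautology $[w\ca^{N+\Delta}\deq w\ca^{N+\Delta}]$ and preserves stratification. I would address this either by strategically excluding further symbols to restrict Ogden's pumping (paying for the enlarged $e$ by taking $N$ correspondingly larger to preserve the hypothesis $\#\text{distinguished}>n^{e+1}$), or by replacing the $\deq$-trigger with an asymmetric gadget whose two $\ca$-blocks have unequal lengths, so that no symmetric pumping is possible. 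A final case analysis over all possible placements of $\gamma\delta\varepsilon$ then certifies that for every Ogden decomposition, some $i\in\{0,2\}$ produces a non-stratified string, contradicting clause~3 of Ogden's lemma.
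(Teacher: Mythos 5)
Your overall strategy --- apply the generalized Ogden's lemma to a hand-crafted stratified formula and pump $\ca$ symbols to force a conflict between atomic subformulae --- is exactly the paper's strategy, and your device of pre-pinning a whole range of variables $w\ca^{N+1},\dots,w\ca^{N+K}$ is a reasonable substitute for the paper's $N!$ divisibility trick. However, as written the proof has two genuine gaps. First, the case you yourself flag as delicate is fatal for your formula and your proposed fixes do not close it. If $\gamma$ lies in the left block and $\varepsilon$ in the right block of the $\deq$-atomic, pumping yields $[w\ca^{N+a}\deq w\ca^{N+b}]$; since \emph{both} new variables are pinned to stratum $1$ by your $\in$-atomics, the $\deq$-constraint is satisfied even when $a\neq b$, so no contradiction arises for any $i$. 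Making the two blocks unequal in length does not help for the same reason, and ``excluding further symbols'' changes the arithmetic of the hypothesis ($e$ grows, so the distinguished block must grow to exceed $n^{e+1}$) in a way you have not verified is consistent with the rest of the argument. The paper avoids this trap entirely by using atomics of the form $w\in w\ca^N$ and $w\ca^{N+N!}\in w$: the bare variable $w$ cannot absorb any pumping, so there is no two-sided escape, and the collision produces the $2$-cycle $w\in w\ca^{N+N!}\in w$, which no stratification can satisfy.

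Second, your case analysis for decompositions that are not confined to the distinguished block is only gestured at, and with your colouring (no excluded positions) the Ogden bound gives merely $d(\gamma\delta\varepsilon)\leq n$, which does \emph{not} prevent $\gamma$ or $\varepsilon$ from containing a balanced pair of brackets together with a single distinguished $\ca$ --- e.g.\ a segment reaching from the tail of the distinguished block across $\deq w\ca^N]$ into the next conjunct. Pumping such a segment can produce new well-formed, stratified conjuncts, so condition~3 need not fail. The paper closes this hole structurally: all $M+1$ opening brackets are placed at the very front of $\vphi_n$, every closing bracket lies after the distinguished block, and the filler $\beta^M$ carries $M=n^{e+1}$ distinguished $\deq$ symbols, so any $\gamma\delta\varepsilon$ containing a bracket is forced to contain far more distinguished positions than $n^{e(\gamma\delta\varepsilon)+1}$ allows. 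You would need an analogous mechanism --- some combination of formula shape and colouring that makes every bracket-containing decomposition violate condition~2 --- before the reduction to ``$\gamma$ and $\varepsilon$ are pure $\ca$-strings inside the distinguished block'' is justified.
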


\begin{proof}
We appeal to the generalized Ogden's lemma, showing that for all $n\geq1$ we can produce a stratified formula $\vphi_n$ and coloring of the symbols such that when it is sufficiently pumped we fail to have a stratified formula. This formula will be
\begin{equation*}
\vphi_n\e[^{M+1}w\in w{\distinguish{\ca}}^{N}\beta^M\to w{\exclude{\ca}}^{N+N!}\in w]
\end{equation*}
Where $\beta$ is the string $\to w{\distinguish{\deq}}w]$, $N=n+1$, and $M=n^{N+N!+1}$. From this string we distinguish all $\ca$ symbols appearing in $w\ca^N$ and all $\deq$ symbols appearing in any $\beta$. We exclude all $\ca$ symbols appearing in $w\ca^{N+N!}$.\footnote{As a convention, distinguished positions are written with an $\distinguish{\text{overtilde}}$ and the excluded positions are written with an $\exclude{\text{undertilde}}$.} In this case, the number of distinguished positions is $N+M=N+n^{N+N!+1}$ and the number of excluded positions is $N+N!$. Hence $d>n^{e+1}$.

Henceforth, for any substring $\alpha$ of $\vphi_n$, let $d(\alpha)$ be the number of distinguished positions appearing in $\alpha$, and $e(\alpha)$ the number of excluded positions. Suppose, for a contradiction, that $\vphi_n$ may be partitioned as $\beta\gamma\delta\varepsilon\zeta$ as in the generalized Ogden's lemma, so that:
\begin{enumerate}
\item[\textup{1.}] $d(\gamma\varepsilon)\geq1$;
\item[\textup{2.}] $d(\gamma\delta\varepsilon)\leq n^{e(\gamma\delta\varepsilon)+1}$; and
\item[\textup{3.}] for all $i\in\N$, $\beta\gamma^i\delta\varepsilon^i\zeta$ is a stratified formula.
\end{enumerate}
We shall derive a contradiction from this by showing that we must have $\gamma\delta\varepsilon$ be some subset of the $\ca$ symbols coming from ${w}\,{\ca^{N}}$, and therefore for some $i$ we have that $\beta\gamma^i\varepsilon\delta^i\zeta$ contains both $w\in{{w}\,{\ca^{N+N!}}}$ and ${{w}\,{\ca^{N+N!}}}\in w$, failing stratifiability. The strategy for showing this is as follows:
\begin{enumerate}
\item[\textup{1.}] First we show that no $[$ may appear in $\gamma\varepsilon$.
\item[\textup{2.}] By well-nestedness, no $]$ may appear in $\gamma\varepsilon$ either.
\item[\textup{3.}] Therefore the number of atomic formulae appearing in $\beta\gamma^i\delta\varepsilon^i\zeta$ must remain the same.
\item[\textup{4.}] Therefore $\varepsilon$ and $\gamma$ may only consist of $\ca$s.
\item[\textup{5.}] We show that $\gamma\delta\varepsilon$ spans only the distinguished $\ca$ symbols from $w\ca^{N}$.
\item[\textup{6.}] Therefore there is $i$ such that $\beta\gamma^i\delta\varepsilon^i\zeta$ contains both $w\in{{w}\,{\ca^{N+N!}}}$ and ${{w}\,{\ca^{N+N!}}}\in w$ as subformulae, contradicting stratifiability.
\end{enumerate}
Now for the promised detail. Suppose that some $[$ appears in $\gamma\varepsilon$. Since both \(\beta\delta\zeta\) and \(\beta\gamma\delta\varepsilon\zeta\) are stratified formulae, and hence well-nested, $\gamma\varepsilon$ must contain the same number of $]$ symbols as it does $[$. Since all $[$ symbols occur at the beginning of the formula, $\gamma\delta\varepsilon$ spans an initial segment (minus some initial $[$ symbols). Since all $]$ symbols lie after the $N$ distinguished $\ca$ symbols from $w\in{{w}\,{\ca^N}}$, we have that $d(\gamma\delta\varepsilon)\geq N=n+1>n^1$, so for $d(\gamma\delta\varepsilon)\leq n^{e(\gamma\delta\varepsilon)+1}$ to hold, \(\gamma\delta\varepsilon\) needs to contain some excluded positions. Therefore, $\gamma\delta\varepsilon$ spans from an initial $[$ all the way to the term $w\ca^{N+N!}\in w$, which in particular means that it contains \emph{all} $N+M$ distinguished positions appearing in $\vphi_n$. Since there are only $N+N!$ excluded positions in all of $\vphi_n$, there is no way for $d(\gamma\delta\varepsilon)=N+M=N+n^{N!+N+1}$ to be at most $n^{e(\gamma\delta\varepsilon)+1}$. Hence $\gamma\varepsilon$ contains no $[$ symbols.

Now, by the well-nestedness of stratified formulae, $\gamma\varepsilon$ also contains no $]$ symbols. We may also deduce that the number of atomic formulae appearing in $\beta\gamma^i\delta\varepsilon^i\zeta$ remains the same for all $i\in\N$, as the only way to increase the number of atomic formulae is the introduction of new $\to$ symbols, which bring with them new $[$ and $]$ symbols.

Since no new atomic formulae are being introduced, we get no new copies of any relation symbols ($\deq$ and $\in$), and no new variables (so no new $w$ symbols). Also, there are no $\forall$ or $\bot$ symbols in $\vphi_n$, so there is no chance of $\gamma\varepsilon$ containing those. Therefore, $\gamma$ and $\varepsilon$ consist only of some number of $\ca$ symbols. By $e(\gamma\varepsilon)=0$, $\gamma$ and $\varepsilon$ can only contain $\ca$ symbols from the distinguished collection in $w\in w\ca^N$, and must contain at least one.

Therefore, letting $k$ be the number of $\ca$s contained in $\gamma\varepsilon$, note that $1\leq k\leq N$ and so $k$ divides $N!$. Then
\begin{equation*}
\begin{aligned}
\beta\gamma^{1+N!/k}\delta\varepsilon^{1+N!/k}\zeta&\e[^{M+1}w\in {w\ca^{N-k+(1+N!/k)k}}\beta^M\to {w\ca^{N+N!}}\in w]\\
&\e[^{M+1}w\in {w\ca^{N+N!}}\beta^M\to {w\ca^{N+N!}}\in w]
\end{aligned}
\end{equation*}
which is not stratified. Hence we conclude that the language of stratified formulae is not context-free.
\end{proof}

The counterexample used in the proof of Theorem~\ref{thm:main-thm} may be easily modified to show that a wide variety of related languages are not context-free. For example, we could easily replace each $\to$ with a $\land$, so systems of first-order syntax that use $\lnot$ and $\land$ instead of $\bot$ and $\to$ also don't generate a context-free set of stratified formulae. Similarly the same can be shown for the syntax generated with the single connective of \emph{alternative denial} $(\vphi\mid\psi)$, meaning $\lnot(\vphi\land\psi)$, as described in \cite{quine_new_1937}.

A related language is the language of \emph{acyclic formulae}, introduced in \cite{al-johar_axiom_2014}. A formula \(\vphi\) is \emph{acyclic} if:
\begin{enumerate}
\item[\textup{1.}] For each variable \(x\) appearing in \(\vphi\), either all occurrences of \(x\) are free or all occurrences of \(x\) are bound by the same quantifier.
\item[\textup{2.}] There is no sequence of variables \(\gen{x_i\mid0\leq i\leq n}\) and sequence of atomic formulae \(\gen{\alpha_i\mid 0\leq i<n}\) appearing as distinct subformulae of \(\vphi\) such that \(x_0\e x_n\) and, for all \(i<n\), \(\alpha_i\) is of the form \(x_i\deq x_{i+1}\), \(x_{i+1}\deq x_i\), \(x_i\in x_{i+1}\), or \(x_{i+1}\in x_i\). We allow any \(n\geq 1\), so in particular \(x\deq x\) is not acyclic, for example.
\end{enumerate}
Note that any acyclic formula must be stratified. One can alter the formula \(\vphi_n\) from the proof of Theorem~\ref{thm:main-thm} to be acyclic and work with the same argument. In the formula below, let \(\prod_{i=1}^M(\alpha_i)\) refer to the concatenation \(\alpha_1\alpha_2\cdots\alpha_M\). Let
\begin{equation*}
\psi_n\e[^{M+1}w\in{w\distinguish{\ca}^N}\prod_{\substack{i=1\\i\neq N,N+N!}}^{M+2}\left(\to w\distinguish{\deq}{w\ca^i}]\right)\to w\exclude{\ca}^{N+N!}\in w],
\end{equation*}
where \(N=n+1\) and \(M=n^{N+N!+1}\). The rest of the proof is almost the same, with the exception of noting that \(\gamma\varepsilon\) needn't only contain some of the distinguished \(\ca\) symbols, but may indeed contain some of the \(\ca\) from the substrings \(\to{{w}\mathrel{\distinguish{\deq}}{{w}\,{\ca^i}}}\). However, each of \(\gamma\) and \(\varepsilon\) must still be contained to some of the \(\ca\) symbols from a single variable substring, at least one of which is included in the distinguished substring \({w}\,{\distinguish{\ca}^N}\). Hence, after pumping an appropriate number of times, \(\beta\gamma^i\delta\varepsilon^i\zeta\) is no longer acyclic (indeed, not even stratified).

Finally, the set of \emph{unstratified} formulae of set theory is also not context-free. To see this, we use a slight modification on the same counterexample.
\begin{equation*}
\vphi_n'\e [^{M+1}w\in {{w}\,{\distinguish{\ca}}^N}\beta^M\to {{w}\,{\exclude{\ca}}^N}\in w]
\end{equation*}
Here $N=n+1$, $M=n^{N+1}$, the $\ca$ symbols from the first \({w}\,{\ca^N}\) variable are distinguished, the $\ca$ symbols from the second \({w}\,{\ca^N}\) variable are excluded, and $\beta$ is the string $\to w{\distinguish{\deq}}w$, with $\deq$ distinguished. Then we see that
\begin{equation*}
d=N+M=N+n^{N+1}>n^{N+1}=n^{e+1}.
\end{equation*}
Furthermore, the same argument shows that we may only pump the $\ca$ symbols from ${w}\,{\in}\,{w}\,{\ca^N}$, and any amount of pumping that increases the number of distinguished \(\ca\)s will break the unstratifiability.

\subsection{Introducing the exo-stratification}\label{s:main-thm;ss:exo-stratification}

Let us first consider a somewhat unreasonable syntactic system for first-order formulae. We begin by noting that there are $\aleph_0$ many stratified formulae of set theory and $\aleph_0$ many unstratified formulae of set theory. Therefore, there exists a bijection
\begin{equation*}
f\colon\{\text{formulae of set theory}\}\to\N
\end{equation*}
such that $f(\vphi)$ is even if and only if $\vphi$ is stratified. We define our syntactic system with a single symbol $\star$, saying that $\star^n$ is the formula $f^{-1}(n)$. In this case, the set of stratified formulae is context-free (indeed regular!). However, this syntax is highly undesirable. While one can encode \(f\) in a computable way, it is completely impractical for human use and we run into the issue that the formal language of deduction rules associated with these formulae are not context-free.

We now introduce a new system of syntax of first-order formulae based very closely on the first-order system, which we shall refer to as \emph{enriched syntax}. The only new introduction is a second `dimension' of variables that acts as an in-syntax representation of types. Instead of a variable being $w$ with some number of $\ca$ symbols appended, it is now a $v$ with some number of $\ca$ symbols appended, and some number of $\star$ symbols appended to that. Formally, this is spelled out in the following context-free grammar.

\begin{equation*}
\begin{tabular}{ccccccc}
$\ttF$&$\mapsto$&$\ttA$&$|$&$[\ttF\to\ttF]$&$|$&${[{\forall}\,{\ttV}]}\,{\ttF}$\\
$\ttA$&$\mapsto$&$\bot$&$|$&$\ttV\deq\ttV$&$|$&$\ttV\in\ttV$\\
$\ttV$&$\mapsto$&$\ttP$&$|$&${\ttV}\,{\star}$&\\
$\ttP$&$\mapsto$&$v$&$|$&${\ttP}\,{\ca}$
\end{tabular}
\end{equation*}
Here the symbols \(\ttF\), \(\ttA\), \(\ttV\), and \(\ttP\) can be imagined as standing for `formula', `atomic formula', `variable', and `pre-variable' respectively. Henceforth, for notational convenience, we shall denote the string
\begin{equation*}
v\underbrace{{\ca}\,{\ca}\cdots{\ca}}_{n\text{ many}}\underbrace{{\star}\,{\star}\cdots{\star}}_{m\text{ many}}
\end{equation*}
as $v_n^m$.

On the semantic level, interpreting the strings \(v_n^m\) as variables, there is no difference between the usual syntax and the enriched syntax: Both the collections \(\{{w\ca^n}\mid n\in\N\}\) and \(\{v_n^m\mid n,m\in\N\}\) serve perfectly well at their role of providing an infinite collection of distinct strings that can be isolated from the other components of a formula. Hence one can quite easily translate the definition of, say, a stratified formula over to the enriched syntax, and indeed the proof of Theorem~\ref{thm:main-thm} can be modified (by replacing all \(w\)s with \(v\)s) to show that the language of stratified formulae in the enriched syntax is not context-free. However, this is not the language that we wish to consider; instead, we are only going to pay attention to the \emph{exo-stratified} formulae.

\begin{defn}
A formula $\vphi$ in the enriched syntax system is \emph{exo-stratified} if:
\begin{enumerate}
\item[\textup{1.}] Whenever $v_{n_1}^{m_1}\deq v_{n_2}^{m_2}$ appears as a subformula in $\vphi$, $m_1=m_2$; and
\item[\textup{2.}] whenever $v_{n_1}^{m_1}\in v_{n_2}^{m_2}$ appears as a subformula in $\vphi$, $m_1=m_2-1$.
\end{enumerate}
\end{defn}
Any exo-stratified formula is stratified, e.g.\ by stratification $\sigma(v_n^m)=m$. This is where the name comes from: While most stratified formulae have an implicit stratification, an exo-stratified formula demonstrates its stratification through its variables. However, we impose \emph{no restriction} on the values of $n$ in our variables $v_n^m$. Therefore it is entirely possible for a formula to be exo-stratified, but no longer be stratified if we remove the $\star$ symbols; consider the stratified formula $v_0^1\in v_0^2$ (that is, \({{v}\,{\star}}\in{{v}\,{\star}\,{\star}}\)), which would turn into the unstratified $v\in v$ if we omitted the $\star$ symbols.

Note also that every stratified formula in the enriched syntax may be turned into an exo-stratified formula by substitution of variables. If $\vphi$ is a stratified formula with stratification $\sigma$, and $\pi\colon\N^2\to\N$ is an injection, then replacing each $v_n^m$ by $v_{\pi(n,m)}^{\sigma(v_n^m)}$ produces an exo-stratified formula.

\begin{thm}
The language of exo-stratified formulae is context-free.
\end{thm}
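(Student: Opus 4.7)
The plan is to exhibit an explicit context-free grammar whose language is exactly the set of exo-stratified formulae. The top-level productions for formulae, quantified variables, and the $\bot$ clause from the enriched-syntax grammar displayed above can be reused unchanged, so the only real task is to replace the productions $\ttA\mapsto\ttV\deq\ttV$ and $\ttA\mapsto\ttV\in\ttV$ by rules that enforce the exo-stratification constraints on the $\star$-counts inside each atomic formula. The formula- and quantifier-level structure is already perfectly context-free, and the constraint defining exo-stratification is purely \emph{local} to each atomic subformula, which is what makes a grammar-based approach feasible.

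For equality, I need to generate precisely the strings $v\ca^{n_1}\star^{m}\deq v\ca^{n_2}\star^{m}$ with arbitrary $n_1,n_2,m\in\N$ and the two trailing $\star^{m}$ blocks synchronised. Although these blocks are not adjacent, the standard palindromic trick allows a single context-free nonterminal to produce them symmetrically outwards from a central kernel. Concretely, I would introduce a nonterminal $\ttE$ generating the equality atomics together with an auxiliary $\ttE'$, using
\begin{equation*}
\begin{tabular}{ccccc}
$\ttE$&$\mapsto$&$\ttP\,\ttE'$&&\\
$\ttE'$&$\mapsto$&${\star}\,\ttE'\,{\star}$&$|$&$\deq\ttP$\\
$\ttP$&$\mapsto$&$v$&$|$&$\ttP\,\ca$
\end{tabular}
\end{equation*}
together with an entirely analogous pair $\ttM,\ttM'$ for membership atomics, whose base case production is $\ttM'\mapsto{\in}\ttP\star$ so as to inject the extra $\star$ demanded by $m_1=m_2-1$. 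Substituting $\ttA\mapsto\bot\mid\ttE\mid\ttM$ into the enriched-syntax grammar then yields the candidate CFG.

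Correctness is a routine double induction. Any terminal string derived from $\ttE$ has the form $v\ca^{n_1}\star^{m}\deq v\ca^{n_2}\star^{m}$, where $m$ is exactly the number of applications of $\ttE'\mapsto{\star}\ttE'{\star}$, and similarly for $\ttM$; hence every generated formula is exo-stratified. Conversely, every exo-stratified formula is derivable, by parsing each atomic subformula using its (known) $\star$-counts and then arguing inductively on the formula's logical structure above that. The only genuinely content-bearing step, and hence the main (mild) obstacle, is spotting the palindromic production $\ttE'\mapsto{\star}\ttE'{\star}$ that allows a single context-free nonterminal to synchronise two equal-length blocks of $\star$s separated by the central subword $\deq\ttP$ (respectively $\in\ttP\star$); everything else is bookkeeping.
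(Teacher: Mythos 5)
Your proposal is correct and is essentially the paper's own proof: the palindromic production ${\star}\,\ttE'\,{\star}$ synchronising the two $\star$-blocks around the central $\deq\ttP$ (resp.\ ${\in}\,\ttP$) kernel is exactly the paper's rule $\ttE\mapsto{\star}\,\ttE\,{\star}$, and your grammar differs only cosmetically (you factor the leading $\ttP$ into $\ttE$ and $\ttM$ and attach the extra $\star$ for membership in the base case $\ttM'\mapsto{\in}\,\ttP\,{\star}$, whereas the paper attaches it at the end via $\ttA\mapsto\ttP\,\ttM\,{\star}$).
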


\begin{proof}
The following context-free grammar generates the language of exo-stratified formulae:
\begin{equation*}
\begin{tabular}{ccccccc}
$\ttF$&$\mapsto$&$\ttA$&$|$&$[\ttF\to\ttF]$&$|$&${[{\forall}\,{\ttV}]}\,{\ttF}$\\
$\ttA$&$\mapsto$&$\bot$&$|$&${\ttP}\,{\ttE}$&$|$&${\ttP}\,{\ttM}\,{\star}$\\
$\ttE$&$\mapsto$&$\deq\ttP$&$|$&${\star}\,{\ttE}\,{\star}$&\\
$\ttM$&$\mapsto$&$\in\ttP$&$|$&${\star}\,{\ttM}\,{\star}$&\\
$\ttV$&$\mapsto$&$\ttP$&$|$&${\ttV}\,{\star}$&\\
$\ttP$&$\mapsto$&$v$&$|$&${\ttP}\,{\ca}$&
\end{tabular}
\end{equation*}
This grammar is somewhat more esoteric than the grammars for the usual and enriched syntax systems, the definitions of which lead almost immediately to their grammars. First note that all the old symbols should be interpreted the same way: \(\ttF\) is `formula', \(\ttA\) is `atomic formula', \(\ttV\) is `variable', and \(\ttP\) is `pre-variable'. The new introductions are \(\ttE\) for `equality' and \(\ttM\) for `membership', which concern the $x\deq y$ and $x\in y$ atomic subformulae respectively. We shall describe in detail the process for \(\ttE\), and \(\ttM\) is extremely similar.\\
An atomic formula of the form $x\deq y$ appearing in an exo-stratified formula must be of the form $v_{n_1}^m=v_{n_2}^m$. Hence, the grammar begins by adding a pre-variable to the left of the string. Since exo-stratified formulae have no restriction on the number of $\ca$ symbols appearing on variables, this is safe to do and sets up the \(\ttE\) system. Within \(\ttE\), we want to generate a string of the form ${\star^m}\deq{{\ttP}\,{\star^m}}$, so one either has the choice of adding one star to each side (${\star}\,{\ttE}\,{\star}$) or ending the loop ($\deq\ttP$). Hence, after choosing to add stars $m$ times, the construction looks as follows:
\begin{equation*}
\begin{aligned}
\ttA&\implies{\ttP}\,{\ttE}\\
&\implies{\ttP}\,{\star}\,{\ttE}\,{\star}\\
&\implies{\ttP}\,{\star}\,{\star}\,{\ttE}\,{\star}\,{\star}\\
&\makebox[\widthof{$\implies$}][c]{$\mathrlap{\vdots}$}\\
&\implies{\ttP}\,{\star^m}\,{\ttE}\,{\star^m}\\
&\implies{\ttP}\,{\star^m}\,{\deq}\,{\ttP}\,{\star^m}
\end{aligned}
\end{equation*}
Then it is simply a matter of turning each pre-variable placeholder $\ttP$ into a $v$ with some number of $\ca$ symbols appended.

The case for \(\ttM\) is very similar, but there is one additional $\star$ added to the end of the string on the second variable, ensuring that the formula remains exo-stratified.
\end{proof}

\bibliographystyle{amsplain}
\bibliography{Bibliography/stratifiable_computable.bib}

\section{Acknowledgements}

The results obtained in this paper started off as the product of a University of Cambridge Summer Research in Mathematics (SRIM) Programme in the summer of 2020 with the goal of extending the author's understanding of formal languages and automata. The author would like to thank Thomas Forster for setting up such project despite the limitations of remote learning, for his persistent help throughout that time, for encouraging the writing of this paper, and for the many conversations throughout that have helped to shape it. They also thank Andrew Brooke-Taylor, Randall Holmes, and Asaf Karagila for their feedback on the paper. The author would also like to thank the referee for their feedback on an early version of this paper.

\end{document}